      \newtheorem{theorem}{Theorem}[section]
      \newtheorem{lemma}[theorem]{Lemma}
      \newcommand{\ct}[1]{\langle {#1}\rangle \lower.3ex\hbox{$_{t}$}}
      \newcommand{\lt}[1]{[ {#1}] \lower.3ex\hbox{$_{t}$}}
\begin{document}

\title[{\tiny Prescribing capacitary curvature measures on planar convex domains}]{Prescribing capacitary curvature measures on planar convex domains}

\author{J. Xiao}
\address{Department of Mathematics and Statistics, Memorial University of Newfoundland, St. John's, NL A1C 5S7, Canada}
\email{jxiao@math.mun.ca}
\thanks{This project was in part supported by NSERC of Canada.}

\subjclass[2010]{53C45, 53C42, 52B60, 35Q35, 31B15}

\date{}


\keywords{}

\begin{abstract}
For $p\in (1,2]$ and a bounded, convex, nonempty, open set $\Omega\subset\mathbb R^2$ let $\mu_p(\bar{\Omega},\cdot)$ be the $p$-capacitary curvature measure (generated by the closure $\bar{\Omega}$ of $\Omega$) on the unit circle $\mathbb S^1$. This paper shows that such a problem of prescribing $\mu_p$ on a planar convex domain: ``Given a finite, nonnegative, Borel measure $\mu$ on $\mathbb S^1$, find a bounded, convex, nonempty, open set $\Omega\subset\mathbb R^2$ such that $d\mu_p(\bar{\Omega},\cdot)=d\mu(\cdot)$" is solvable if and only if  $\mu$ has centroid at the origin and its support $\mathrm{supp}(\mu)$ does not comprise any pair of antipodal points. And, the solution is unique up to translation. Moreover, if $d\mu_p(\bar{\Omega},\cdot)=\psi(\cdot)\,d\ell(\cdot)$ with $\psi\in C^{k,\alpha}$ and $d\ell$ being the standard arc-length element on $\mathbb S^1$, then $\partial\Omega$ is of $C^{k+2,\alpha}$.    
\end{abstract}
\maketitle


\section{Statement of Theorem \ref{t11}}\label{s1}
\setcounter{equation}{0}

Continuing from \cite{X1} and \cite{Je96a, Je96b, CNSXYZ, X2}, we prove
\begin{theorem}\label{t11} Let $(p,k,\alpha)\in (1,2]\times\mathbb N\times(0,1)$ and $\mu$ be a finite nonnegative Borel measure on the unit circle $\mathbb S^1$ of $\mathbb R^2$. 
\begin{itemize}
\item[(i)] Existence - there is a bounded, convex, nonempty, open subset $\Omega$ of $\mathbb R^2$ such that $d\mu_p(\bar{\Omega},\cdot)=d\mu(\cdot)$ if and only if $\mu$ has centroid at the origin and its support $\mathrm{supp}(\mu)$ does not comprise any pair of antipodal points.

\item[(ii)] Uniqueness - the domain $\Omega$ in (i) is unique up to translation.

\item[(iii)] Regularity - if $d\mu_p(\bar{\Omega},\cdot)=\psi(\cdot)\,d\ell(\cdot)$, $d\ell$ is the standard arc-length element on $\mathbb S^1$, and $0<\psi\in C^{k,\alpha}(\mathbb S^1)$, i.e., its $k$-th derivative $\psi^{(k)}$ is  $\alpha$-H\"older continuous on $\mathbb S^1$, then the boundary $\partial\Omega$ of $\Omega$ is of $C^{k+2,\alpha}$.
\end{itemize}
\end{theorem}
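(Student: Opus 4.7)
The plan is to establish (i)--(iii) by adapting to the $p$-capacitary setting the strategy developed by Jerison \cite{Je96a,Je96b} and the author in \cite{X1} for capacitary Minkowski-type problems, exploiting the simplifications special to the planar case.

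For the \emph{necessity} in (i), I would begin from the Hadamard-type first variation formula that realises $\mu_p(\bar\Omega,\cdot)$ as the Minkowski derivative of the $p$-capacity: for any convex test body $\bar K \subset \mathbb R^2$,
\[
\frac{d}{dt}\bigg|_{t=0^{+}} C_p\!\left(\bar\Omega + t\bar K\right) = c_{p}\int_{\mathbb S^1} h_{\bar K}(\theta)\,d\mu_p(\bar\Omega,\theta)
\]
with a positive constant $c_p$. Translation invariance of $C_p$ applied to $\bar K=\{v\}$ (so $h_{\bar K}(\theta)=\langle v,\theta\rangle$) forces $\int_{\mathbb S^1}\langle v,\theta\rangle\,d\mu_p=0$ for every $v\in\mathbb R^2$, which is the centroid condition. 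The antipodal exclusion is necessary by two-dimensionality: if $\mathrm{supp}(\mu_p)$ lay in an antipodal pair $\{\theta_0,-\theta_0\}$, any convex body producing $\mu_p$ would collapse to a segment, contradicting the openness of $\Omega$ in $\mathbb R^2$.

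For \emph{sufficiency}, I would weakly approximate $\mu$ by discrete measures $\mu_N=\sum_{i=1}^{m_N} c_i^{(N)}\delta_{\theta_i^{(N)}}$ with centroid zero and non-antipodal supports. For each $\mu_N$, the polygonal $p$-Minkowski problem can be solved by minimising $h\mapsto\int_{\mathbb S^1} h\,d\mu_N$ over the cone of polygonal support functions subject to the normalisation $C_p(\bar P)=1$; coerciveness is guaranteed by the non-antipodal hypothesis, Lagrange multipliers identify $\mu_p(\bar P_N,\cdot)$ with a scalar multiple of $\mu_N$, and a dilation matches the total mass. Blaschke's selection theorem then extracts a Hausdorff-convergent subsequence $P_N\to\bar\Omega$, and weak-$*$ continuity of $\mu_p(\bar\cdot,\cdot)$ under Hausdorff convergence (cf.\ \cite{X1,CNSXYZ}) yields $\mu_p(\bar\Omega,\cdot)=\mu$. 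For \emph{uniqueness} in (ii), I would invoke the planar $p$-capacitary Brunn--Minkowski inequality
\[
C_p\!\left((1-t)\bar K_0 + t\bar K_1\right)^{\frac{1}{2-p}}\geq (1-t)\,C_p(\bar K_0)^{\frac{1}{2-p}}+t\,C_p(\bar K_1)^{\frac{1}{2-p}},
\]
valid for $p\in(1,2)$ (with the logarithmic form handling $p=2$) together with its rigidity statement that equality holds only when $\bar K_1$ is a translate of $\bar K_0$; the standard Minkowski uniqueness scheme combined with the Hadamard formula then forces $\bar\Omega_1=\bar\Omega_0+v$.

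For \emph{regularity} in (iii), I plan to parameterise $\partial\Omega$ by its support function $h(\theta)$, so that the radius of curvature equals $(h+h'')(\theta)$. The $p$-capacitary curvature measure admits the representation
\[
d\mu_p(\bar\Omega,\theta)=|\nabla U|^{p}\!\bigl(\xi(\theta)\bigr)\,(h+h'')(\theta)\,d\ell(\theta),
\]
where $U$ is the $p$-equilibrium potential of $\bar\Omega$ and $\xi(\theta)\in\partial\Omega$ is the boundary point with outer normal $\theta$. The assumption $d\mu_p=\psi\,d\ell$ thus yields the free-boundary system
\[
|\nabla U|^{p}(\xi(\theta))\,(h+h'')(\theta)=\psi(\theta),\qquad \Delta_p U=0\text{ on }\mathbb R^2\setminus\bar\Omega,
\]
with $U|_{\partial\Omega}=1$ and the appropriate decay at infinity. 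A Hopf-type boundary-point lemma for $p$-harmonic functions on convex domains gives $|\nabla U|>0$ on $\partial\Omega$, so $\Delta_p$ is uniformly elliptic in a neighbourhood of $\partial\Omega$; a hodograph transform then linearises the geometry and a Schauder bootstrap promotes a base $C^{2,\alpha}$ regularity to $C^{k+2,\alpha}$. The \emph{main obstacle} I anticipate is the coupling in this free-boundary problem: securing the base $C^{2,\alpha}$ step, together with the quantitative weak-$*$ continuity of $\mu_p$ needed to validate the polygonal approximation in (i), both hinge on uniform estimates for the $p$-equilibrium potential near $\partial\Omega$ extracted from \cite{Je96a,Je96b,CNSXYZ,X1,X2}.
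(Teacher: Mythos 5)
Your overall architecture coincides with the paper's: Hadamard variational formula plus translation invariance for the centroid condition, discrete approximation and a constrained minimization for the polygonal case, Blaschke selection for the limit, the equality case of the $p$-capacitary Brunn--Minkowski inequality for uniqueness, and a Monge--Amp\`ere-type curvature equation for regularity. However, the central difficulty of the existence proof is missing from your sketch. After Blaschke selection you only obtain a compact convex limit $\bar\Omega_\infty$, and you must rule out that it degenerates to a segment; otherwise $\Omega_\infty$ is not open and the ``weak-$*$ continuity of $\mu_p$ under Hausdorff convergence'' you invoke does not apply. This is precisely where the no-antipodal-pair hypothesis enters the limit passage, and it takes real work: one shows, via Fatou's lemma and the uniform lower bound $|\nabla u_{\bar\Omega_j}|\ge c>0$ on $\partial\Omega_j$ (uniform over the approximating polygons), that a segment limit with normals $\pm\xi$ would force $\mu(\{\xi\})>0$ and $\mu(\{-\xi\})>0$, and then, via the Lewis--Nystr\"om higher integrability of $|\nabla u|$ on Lipschitz boundaries together with H\"older's inequality, that $\mu(\{\xi_0\})=0$ for every $\xi_0\notin\{\xi,-\xi\}$, so that $\mathrm{supp}(\mu)=\{\xi,-\xi\}$ --- contradicting the hypothesis. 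Your proposal acknowledges only at the very end that ``uniform estimates for the $p$-equilibrium potential'' are needed, but this degeneracy analysis is the theorem's main content, not a technicality. Likewise the uniform bound $\mathrm{diam}(P_N)\le\kappa_2$ needed before Blaschke selection must be extracted by pairing $h_{P_N}$ against $\mu_N$ and using $\inf_{\theta}\int_{\mathbb S^1}|\theta\cdot\eta|\,d\mu(\eta)>0$, together with the isocapacitary inequalities relating $\mathrm{pcap}$ and diameter.

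Two further gaps. First, in the necessity of the antipodal exclusion you argue that a body producing a two-point-supported measure ``would collapse to a segment''; the implication must run the other way --- a nondegenerate open convex $\Omega$ cannot produce such a measure --- and this again requires the a.e.\ lower bound $|\nabla u_{\bar\Omega}|\ge c>0$ on $\partial\Omega$ so that $\mathrm{supp}(\mu_p(\bar\Omega,\cdot))$ contains the support of the surface-area measure, plus a separate treatment of the case where the origin lies on $\partial\Omega$ (exterior normal cone argument). Second, in (iii) the identity $|\nabla U|^p\,(h+h'')=\psi$ holds a priori only in the Alexandrov (measure) sense, since $\partial\Omega$ is merely convex; you cannot launch a Schauder bootstrap from it. The base regularity ($C^{1,\epsilon}$, then $C^{2,\alpha}$) must come from Caffarelli's Monge--Amp\`ere theory (strict convexity, $C^{1,\alpha}$, interior $W^{2,p}$ and $C^{2,\alpha}$ estimates) combined with boundary gradient estimates for the $p$-equilibrium potential; only afterwards does the bootstrap to $C^{k+2,\alpha}$ proceed. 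The uniqueness argument is essentially correct and matches the paper, except that for $p=2$ the equality case of the logarithmic-capacity Brunn--Minkowski inequality yields a translate \emph{up to dilation}, and one must additionally use the equality of the two capacities to exclude a nontrivial dilation.
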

In the above and below, $\mu_p(\bar{\Omega},\cdot)$ is the $p$-capacitary curvature measure on $\mathbb S^1$ - more precisely - if $u$ is the $p$-equilibrium potential $u_{\bar{\Omega}}$ of $\bar{\Omega}$ - the closure of $\Omega$ (cf. \cite{Lewis, CS, CC}), i.e., the unique solution $u=u_{\bar{\Omega}}$ to the boundary value problem (for a model partial differential equation in geometric potential theory over $\mathbb R^2$; see e.g. \cite{Ada1, Ada2, Aro}):
$$
(\mathrm{eq}_{1<p<2})\quad\quad\begin{cases}
\Delta_p u=\hbox{div}(|\nabla u|^{p-2}\nabla u)=0\quad\mathrm{in}\quad \mathbb R^2\setminus\bar{\Omega};\\
u=0\quad\mathrm{on}\quad{\partial\Omega};\\
\underset{|x|\to\infty}\lim u(x)=1,
\end{cases}
$$
or 
$$
(\mathrm{eq}_{p=2})\quad\quad\begin{cases}
\Delta_{p=2} u=\hbox{div}(\nabla u)=0\quad\mathrm{in}\quad \mathbb R^2\setminus{\bar{\Omega}};\\
u=0\quad\mathrm{on}\quad \partial \Omega;\\
 0<\underset{|x|\to\infty}\liminf \Big(\frac{u(x)}{\log|x|}\Big)\le\underset{|x|\to\infty}\limsup \Big(\frac{u(x)}{\log|x|}\Big)<\infty,
\end{cases}
$$
then 
$$
\mu_p(\bar{\Omega},E)=\int_{\mathsf{g}^{-1}(E)}|\nabla u|^p\,d\mathcal{H}^1=\int_{\mathsf{g}^{-1}(E)}|\nabla u_{\bar{\Omega}}|^p\,d\mathcal{H}^1\ \ \forall\ \ \hbox{Borel}\ E\subset\mathbb S^1,
$$
where $d\mathcal{H}^1$ is the standard $1$-dimensional Hausdorff measure on $\partial\Omega$, $\mathsf{g}^{-1}:\mathbb S^1\to\partial\Omega$ is the inverse of the Gauss map $\mathsf{g}:\partial\Omega\to\mathbb S^1$ (which is defined as the outer unit normal vector at $\partial\Omega$), and the non-tangential limit of $\nabla u=\nabla u_{\bar{\Omega}}$ at each point of $\partial\Omega$ exists $\mathcal{H}^1$-almost everywhere with $|\nabla u|=|\nabla u_{\bar{\Omega}}|\in L^p(\partial\Omega,d\mathcal{H}^1)$ (cf. \cite{LN, LN2, Da}), and hence
$$
d\mu_p(\bar{\Omega},\cdot)=\mathsf{g}_\ast\big(|\nabla u|^p\,d\mathcal{H}^1\big)(\cdot)=\mathsf{g}_\ast\big(|\nabla u_{\bar{\Omega}}|^p\,d\mathcal{H}^1\big)(\cdot)\quad\mathrm{on}\quad\mathbb S^1.
$$

Here it should be pointed out that not only the if-part of Theorem \ref{t11}(i) implies \cite[Theorem 1.2]{CNSXYZ} under $1<p<2=n$ and \cite[Corollary 6.6]{Je96b} under $p=2=n$ due to the fact that $\mathrm{supp}(\mu)$ comprising no any pair of antipodal points amounts to $\mu$ being unsupported on any equator (the intersection of $\mathbb S^1$ with any line passing through the origin) but also Theorems \ref{t11}(ii)\&(iii) under $p\in (1,2)$ have been established in \cite[Theorems 1.2\&1.4]{CNSXYZ}. Our essential contribution to this direction is an establishment of Theorem \ref{t11}(i) and the case $p=2$ of Theorems \ref{t11}(ii)\&(iii).  

Needless to say, Theorem \ref{t11} is not unimportant in that it is nonlinear-potential-theoretic generalization of the classical Minkowski problem in $\mathbb R^2$ concerning the existence, uniqueness and regularity of a planar convex domain with the prescribed curve measure 
$$
d\mu_{cm}=\mathsf{g}_\ast(d\mathcal{H}^1)\ \ \mathrm{on}\ \ \mathbb S^1
$$ 
defined by
$$
\mu_{cm}(E)=\int_{\mathsf{g}^{-1}(E)}\,d\mathcal{H}^1=\mathcal{H}^1\big(\mathsf{g}^{-1}(E)\big)\quad\forall\quad\mathrm{Borel}\ \ E\subset\mathbb S^1.
$$
See e.g. \cite{CY, J, U, K} and their references for an extensive discussion on this subject.

\section{Preparational Material}\label{s2}

Two-fold preparation for validating Theorem \ref{t11} is presented through this intermediate section.

On the one hand, it is necessary to recall three fundamental properties on the variational $1<p<2$ capacity $\mathrm{pcap}(\bar{\Omega})$ and the logarithmic capacity (or conformal radius or transfinite diameter) $\mathrm{2cap}(\bar{\Omega})$ of a compact, convex, nonempty set $\bar{\Omega}\subset\mathbb R^2$ (cf. \cite{Lewis, CS, CC, Je96b, SZ}) determined by:
$$
\mathrm{pcap}(\bar{\Omega})= \lim_{|x|\to\infty}\begin{cases}
 2\pi{\Big(\frac{2-p}{p-1}\Big)^{p-1}}|x|^{2-p}\big(1-u_{\bar{\Omega}}(x)\big)^{p-1} \ \mathrm{as}\ \ p\in (1,2);\\
 \exp\big(\log|x|-u_{\bar{\Omega}}(x)\big)\ \ \mathrm{as}\ \ p=2,
\end{cases}
$$
where $d\mathcal{H}^2$ stands for the two-dimensional Hausdorff measure on $\mathbb R^2$ and $u_{\bar{\Omega}}$ is the solution of either ($\mathrm{eq}_{1<p<2}$) or ($\mathrm{eq}_{p=2}$).

Firstly, according to \cite[Lemma 2.16(a)]{CNSXYZ} for $p\in (1,2)$ and \cite[(6.3)]{Je96b} for $p=2$, we have: 
$$
(\star)\quad\mathrm{pcap}(\bar{\Omega})=
\begin{cases}
\int_{\partial\Omega}|\nabla u_{\bar{\Omega}}|^{p-1}\,d\mathcal{H}^1\ \ \mathrm{as}\ \ p\in (1,2);\\
\exp\left(2\pi\int_{\partial\Omega}\big(\log|\cdot|\big)|\nabla u_{\bar{\Omega}}(\cdot)|\,{d\mathcal{H}^1(\cdot)}\right)\ \ \mathrm{as}\ \ p=2.
\end{cases}
$$

Secondly, upon writing $A(\Omega)$ and $\mathrm{diam}(\Omega)$ for the $2$-dimensional Lebesgue measure of $\Omega$ and the diameter of $\Omega$, we have 
$$
\lim_{p\to 1}\mathrm{pcap}(\bar{\Omega})=\mathcal{H}^1(\partial\Omega)
$$ 
(cf. ($\star$) or \cite{LXZ, GH}) and the following isocapacitary/isodiametric inequalities (cf. \cite{X1, BPS, SZ} and their relevant references):
$$
 (\star\star)\quad \label{e4a}
  \left(\frac{A(\bar{\Omega})}{\pi}\right)^\frac12\le
  \begin{cases}
   \left(\frac{\mathrm{pcap}(\bar{\Omega})}{2\pi\big(\frac{p-1}{2-p}\big)^{1-p}}\right)^\frac1{2-p}\le 2^{-1}\mathrm{diam}(\bar{\Omega})\ \ \hbox{as}\ \ p\in (1,2);\\
   2^{-1}\mathrm{diam}(\bar{\Omega})\le 2\mathrm{pcap}(\bar{\Omega})\le\mathrm{diam}(\bar{\Omega})\ \ \hbox{as}\ \ p=2.
   \end{cases}
$$

Thirdly, if $h_{\bar{\Omega}}(x)=\sup_{y\in\bar{\Omega}}x\cdot y$ stands for the support function of $\bar{\Omega}$, then $(\star)$ can be formulated in the following way (cf. \cite[Theorem 1.1]{CNSXYZ} for $p\in (1,2)$ and \cite[Theorem 3.1]{X2} for $p=2$):
$$
(\star\star\star)\quad\int_{\partial\Omega}|\nabla u_{\bar{\Omega}}(x)|^px\cdot\mathsf{g}(x)\,d\mathcal{H}^1(x)=\begin{cases}
\Big(\frac{2-p}{p-1}\Big)\mathrm{pcap}(\bar{\Omega})\ \ \mathrm{as}\ \ p\in(1,2);\\
2\pi\ \ \mathrm{as}\ \ p=2.
\end{cases}
$$

On the other hand, three key lemmas and their arguments are needed.

\begin{lemma}\label{l21} Let $p\in (1,2]$ and $\Omega\subset\mathbb R^2$ be a bounded, convex, open set with non-empty interior. If $u_{\bar{\Omega}}$ is the $p$-equilibrium potential of $\Omega$ and there is an origin-centered open disk $\mathbb{D}(o,r)$ with radius $r>0$ such that $\Omega\subset \mathbb{D}(o,r)$, then there exists a constant $c>0$ depending only on $r$ such that $|\nabla u_{\bar{\Omega}}|\ge c$ almost everywhere on $\partial\Omega$ with respect to $d\mathcal{H}^1$.
\end{lemma}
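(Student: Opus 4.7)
The plan is a Hopf-type comparison argument in two stages: first I use the enclosure $\Omega\subset\mathbb{D}(o,r)$ to produce a uniform lower bound of $u_{\bar{\Omega}}$ on the sphere $\{|x|=2r\}$, and then I transfer that bound to $\partial\Omega$ by comparing $u_{\bar{\Omega}}$ with a linear $p$-harmonic barrier on a supporting half-disk at a boundary point.

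For the first stage, I would observe that on the exterior $\{|x|>r\}$ both $u_{\bar{\Omega}}$ and the radial $p$-equilibrium potential $u_{\overline{\mathbb{D}(o,r)}}$ are $p$-harmonic, with $u_{\bar{\Omega}}\ge 0 = u_{\overline{\mathbb{D}(o,r)}}$ on $\{|x|=r\}$. At infinity the two potentials carry the normalization prescribed by $(\mathrm{eq}_{1<p<2})$ or $(\mathrm{eq}_{p=2})$: they both tend to $1$ when $p\in(1,2)$, and when $p=2$ their difference tends to $\log r-\log\mathrm{2cap}(\bar{\Omega})\ge 0$ by monotonicity of the logarithmic capacity. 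The standard comparison principle for the $p$-Laplacian, applied on the annular exhaustion $\{r<|x|<R\}$ and then sending $R\to\infty$, therefore yields $u_{\bar{\Omega}}\ge u_{\overline{\mathbb{D}(o,r)}}$ on $\{|x|\ge r\}$. Evaluating the explicit radial formula
\[
u_{\overline{\mathbb{D}(o,r)}}(x)=\begin{cases}1-\bigl(r/|x|\bigr)^{(2-p)/(p-1)}, & p\in(1,2),\\ \log(|x|/r), & p=2,\end{cases}
\]
at $|x|=2r$ then produces a constant $m_0=m_0(p)>0$ such that $u_{\bar{\Omega}}(y)\ge m_0$ whenever $|y|\ge 2r$.

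For the second stage, fix $x_0\in\partial\Omega$ at which the outer unit normal $\nu$ is defined; convexity guarantees this holds $\mathcal{H}^1$-a.e.\ on $\partial\Omega$. By the supporting hyperplane theorem, the open half-plane $H=\{y\in\mathbb{R}^2:(y-x_0)\cdot\nu>0\}$ lies in $\mathbb{R}^2\setminus\bar{\Omega}$. Since $|x_0|\le r$ and $\Omega\subset\mathbb{D}(o,r)$, every point $y\in\partial B(x_0,3r)$ satisfies $|y|\ge 2r$, so $u_{\bar{\Omega}}\ge m_0$ on the semicircular arc $\partial B(x_0,3r)\cap\bar H$. Introduce the linear function
\[
v(y)=\frac{m_0}{3r}\,(y-x_0)\cdot\nu,
\]
which is $p$-harmonic on $\mathbb{R}^2$ for every $p\in(1,2]$ because its gradient is the constant vector $(m_0/3r)\nu$. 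On the half-disk $U=H\cap B(x_0,3r)\subset\mathbb{R}^2\setminus\bar{\Omega}$, one checks $v\equiv 0\le u_{\bar{\Omega}}$ on the diametral segment $\partial H\cap\overline{B(x_0,3r)}$ and $v\le m_0\le u_{\bar{\Omega}}$ on the semicircular arc, so $v\le u_{\bar{\Omega}}$ on $\partial U$. The comparison principle for the $p$-Laplacian then promotes this to $v\le u_{\bar{\Omega}}$ on all of $U$, and since $v(x_0)=u_{\bar{\Omega}}(x_0)=0$ the inner normal derivative at $x_0$---which exists non-tangentially $\mathcal{H}^1$-a.e.\ on $\partial\Omega$ by the boundary regularity results of \cite{LN,LN2,Da} already invoked in the excerpt---satisfies
\[
|\nabla u_{\bar{\Omega}}(x_0)|=\partial_\nu u_{\bar{\Omega}}(x_0)\ge\partial_\nu v(x_0)=\frac{m_0}{3r},
\]
so the lemma holds with $c:=m_0(p)/(3r)$, which depends only on $r$ (and the fixed exponent $p$).

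The main obstacle to overcome is that the classical Hopf-lemma barrier built from an interior tangent ball does \emph{not} carry a uniform radius across all convex $\Omega\subset\mathbb{D}(o,r)$: the tangent-ball radius from outside collapses near flat pieces of $\partial\Omega$. Convexity circumvents this by supplying a supporting \emph{half-plane} at $\mathcal{H}^1$-a.e.\ boundary point, and on a half-plane the simplest possible candidate---an affine function---is automatically $p$-harmonic for \emph{every} $p\in(1,2]$; coupling it with the Step 1 enclosure on $\{|x|=2r\}$ pins down a uniform, $\Omega$-independent gradient lower bound at $x_0$.
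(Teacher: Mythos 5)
Your argument is correct, but it is worth noting that the paper does not actually prove this lemma: it simply cites \cite[Lemma 2.18]{CNSXYZ} for $p\in(1,2)$ and \cite[Theorem 3.2]{X2} for $p=2$. What you supply is a self-contained two-step barrier proof in the spirit of those references: the exterior radial comparison with $u_{\overline{\mathbb{D}(o,r)}}$ (using monotonicity of $p$-capacity to control the normalization at infinity in the $p=2$ case) pins down $u_{\bar{\Omega}}\ge m_0(p)>0$ on $\{|x|\ge 2r\}$, and the key observation that an affine function is $p$-harmonic for every $p$ lets you run a Hopf-type comparison on the supporting half-disk without needing a uniform exterior tangent ball -- exactly the obstruction you correctly identify. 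The payoff of your route is an explicit, $\Omega$-independent constant $c=m_0(p)/(3r)$ and a single argument covering both $p\in(1,2)$ and $p=2$, at the cost of invoking the nontangential boundary regularity of $\nabla u_{\bar{\Omega}}$ from \cite{LN,LN2,Da} to convert the one-sided difference quotient $\liminf_{t\to0^+}u_{\bar{\Omega}}(x_0+t\nu)/t\ge m_0/(3r)$ into the stated a.e.\ lower bound on $|\nabla u_{\bar{\Omega}}|$; this last identification (together with the fact that the nontangential limit of $\nabla u_{\bar{\Omega}}$ is parallel to $\nu$ because $u_{\bar{\Omega}}$ vanishes on $\partial\Omega$) is the only point where you lean on external machinery, and you do cite it. The only cosmetic discrepancy is that your constant also depends on $p$, but the paper itself concedes this dependence later in the text, so nothing is lost.
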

\begin{proof} This follows directly from the case $n=2$ of both \cite[Lemma 2.18]{CNSXYZ} (for $p\in (1,2)$) and \cite[Theorem 3.2]{X2} (for $p=2$).
\end{proof}

\begin{lemma}\label{l22} For $p\in (1,2]$ and integer $m\ge 3$, a family $\{\zeta_j\}_{j=1}^m\subset\mathbb S^1$, and any point ${\mathsf{p}}\in\mathbb R^{m}$ with all nonnegative components ${\mathsf{p}}_1,...,{\mathsf{p}}_m$ let
$$
\begin{cases}
\Omega({\mathsf{p}})=\{x\in\mathbb R^2:\ x\cdot\zeta_j\le {\mathsf{p}}_j\ \forall\ j=1,...,m\};\\
{\mathsf M}=\{{\mathsf{p}}=({\mathsf{p}}_1,...,{\mathsf{p}}_m)\in\mathbb R^m:\ \mathrm{pcap}\big(\Omega({\mathsf{p}})\big)\ge 1\ \&\ {\mathsf{p}}_j\ge 0\ \forall\ j=1,...,m\}.
\end{cases}
$$
Given a sequence of $m$ positive numbers $\{c_j\}_{j=1}^m$, set
$
\Sigma({\mathsf{p}})=\sum_{j=1}^m c_j{\mathsf{p}}_j.
$
If $\{\zeta_j\}_{j=1}^m$ obeys the following three conditions:
\begin{itemize}
\item[{\rm(i)}] for any $\theta\in\mathbb S^{1}$ there is $j\in\{1,...,m\}$ such that $|\theta\cdot\zeta_j|>0$;

\item[{\rm(ii)}] $|\zeta_j+\zeta_k|>0\quad\forall\ j,k\in\{1,...,m\}$;

\item[{\rm(iii)}] $\sum_{j=1}^m c_j\zeta_j=0$.
\end{itemize}

\noindent Then there exists a point ${\mathsf{p}}^\ast\in\mathsf{M}$ such that:
\begin{itemize}

\item[{\rm(iv)}] $\inf_{{\mathsf{p}}\in\mathsf M}\Sigma({\mathsf{p}})=\Sigma({\mathsf{p}}^\ast)>0$;

\item[{\rm(v)}] $\Omega({\mathsf{p}}^\ast)$ is a polygon with $\{F_j\}_{j=1}^m$ and $\{\zeta_j\}_{j=1}^m$ as the only edges and outer unit normal vectors respectively;

\item[{\rm(vi)}] the $p$-equilibrium potential $u_{\Omega({\mathsf{p}}^\ast)}$ of ${\Omega({\mathsf{p}}^\ast)}$) obeys
$$
c_{1\le j\le m}=\tau_p^{-1}{\Sigma({\mathsf{p}}^\ast)}\int_{F_j}|\nabla u_{\Omega({\mathsf{p}}^\ast)}|^p\,d\mathcal{H}^1
$$
with
$$
\tau_p=\begin{cases} (2-p)(p-1)^{-1}\ \ \hbox{as}\ \ p\in (1,2);\\
2\pi\ \ \hbox{as}\ \ p=2.
\end{cases}
$$
\end{itemize}
\end{lemma}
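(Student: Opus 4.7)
The plan is to treat (iv)--(vi) as a constrained minimization of the linear function $\Sigma$ over the closed admissible set $\mathsf M$. Conditions (i) and (iii) together force $\{\zeta_j\}_{j=1}^m$ to positively span $\mathbb R^2$: if some $\theta\in\mathbb S^1$ had $\theta\cdot\zeta_j\le 0$ for every $j$, then $0=\theta\cdot\sum_j c_j\zeta_j$ would force $\theta\cdot\zeta_j=0$ for all $j$, contradicting (i). Hence $\Omega(\mathsf p)$ is bounded and $\mathrm{pcap}(\Omega(\mathsf p))$ is continuous and monotone in $\mathsf p\in\mathbb R_+^m$. To establish (iv), observe that a feasible sequence with $\Sigma(\mathsf p^{(n)})\to 0$ forces $\mathsf p^{(n)}\to 0$ and $\mathrm{pcap}(\Omega(\mathsf p^{(n)}))\to 0<1$, a contradiction; thus $\inf_{\mathsf M}\Sigma>0$. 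Any minimizing sequence has $c_j\mathsf p_j^{(n)}\le\Sigma(\mathsf p^{(n)})\le C$, hence is bounded componentwise, and compactness plus continuity of $\mathrm{pcap}$ yield a minimizer $\mathsf p^\ast\in\mathsf M$.

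For (v), the constraint must be active---$\mathrm{pcap}(\Omega(\mathsf p^\ast))=1$---since otherwise lowering any positive $\mathsf p_j^\ast$ would preserve feasibility while strictly decreasing $\Sigma$. Using the translation invariance of $\Sigma$ guaranteed by (iii) and of $\mathrm{pcap}$, I translate $\Omega(\mathsf p^\ast)$ by a point $v$ of its interior (once non-emptiness of the interior is secured, which rules out degenerate lower-dimensional competitors via a comparison that exploits (ii) together with the strict positivity of $|\nabla u|$ supplied by Lemma \ref{l21}). The translated minimizer then satisfies $\mathsf p_j^\ast>0$ for every $j$. Each $F_j$ must now be a genuine $1$-dimensional edge: a strictly redundant halfspace would permit a decrease in $\mathsf p_j^\ast$ without changing $\Omega$, contradicting minimality; if $F_j$ reduced to a single vertex then $\mathcal H^1(F_j)=0$ makes the first variation $\partial_{\mathsf p_j}\mathrm{pcap}(\Omega(\mathsf p^\ast))$ vanish, so the perturbation $\mathsf p^\ast-\epsilon e_j+\delta e_k$ with $F_k$ already $1$-dimensional and $\delta=O(\epsilon^2)$ chosen to restore $\mathrm{pcap}=1$ (using Lemma \ref{l21} to secure $\partial_{\mathsf p_k}\mathrm{pcap}>0$) decreases $\Sigma$ by $c_j\epsilon-O(\epsilon^2)>0$, a contradiction.

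For (vi), with all $\mathsf p_j^\ast>0$ and only $\mathrm{pcap}=1$ active, KKT supplies a single Lagrange multiplier $\lambda>0$ with $c_j=\lambda\,\partial_{\mathsf p_j}\mathrm{pcap}(\Omega(\mathsf p^\ast))$ for every $j$. A Hadamard-type first-variation identity for $\mathrm{pcap}$---calibrated by testing against dilations using the scaling law $\mathrm{pcap}(\lambda\Omega)=\lambda^{2-p}\mathrm{pcap}(\Omega)$ (resp. $\lambda\,\mathrm{pcap}(\Omega)$ when $p=2$) together with the integral formula $(\star\star\star)$---identifies $\partial_{\mathsf p_j}\mathrm{pcap}(\Omega(\mathsf p^\ast))$ as a fixed positive multiple of $\int_{F_j}|\nabla u_{\Omega(\mathsf p^\ast)}|^p\,d\mathcal H^1$ (the constant being reached through $\mathrm{pcap}$ for $p\in(1,2)$ and through $\log\mathrm{pcap}$ for $p=2$). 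Multiplying the KKT relation by $\mathsf p_j^\ast$, summing over $j$, and invoking $(\star\star\star)$ in the form
$$\sum_{j=1}^m\mathsf p_j^\ast\int_{F_j}|\nabla u_{\Omega(\mathsf p^\ast)}|^p\,d\mathcal H^1=\int_{\partial\Omega(\mathsf p^\ast)}h_{\Omega(\mathsf p^\ast)}(\mathsf g(x))|\nabla u_{\Omega(\mathsf p^\ast)}|^p\,d\mathcal H^1=\tau_p\,\mathrm{pcap}(\Omega(\mathsf p^\ast))=\tau_p$$
simultaneously pins down the universal constant and $\lambda=\Sigma(\mathsf p^\ast)$, reassembling to the claimed $c_j=\tau_p^{-1}\Sigma(\mathsf p^\ast)\int_{F_j}|\nabla u_{\Omega(\mathsf p^\ast)}|^p\,d\mathcal H^1$.

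The main obstacle will be the rigorous Hadamard variation of $\mathrm{pcap}$ at a polygon, where $|\nabla u|$ is singular at the vertices and the support function is only piecewise linear on $\mathbb S^1$. I anticipate this will require approximating $\Omega(\mathsf p^\ast)$ by convex bodies with smoothed corners, applying the standard smooth variational formula there, and passing to the limit via the boundary-gradient control and $\mathrm{pcap}$ continuity from \cite{LN, LN2, Da, CNSXYZ, X2}; this is also where the dichotomy between $p\in(1,2)$ and $p=2$ genuinely demands separate treatment.
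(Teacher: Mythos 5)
Your proposal is correct and follows essentially the same route as the paper: minimize $\Sigma$ over $\mathsf M$, get a minimizer by compactness and continuity of $\mathrm{pcap}$, rule out degenerate (point or segment) limits via (ii), translate the origin into the interior so that all $\mathsf p_j^\ast>0$, and then combine first-order optimality with the Hadamard variational formula and the identity $(\star\star\star)$ to pin the multiplier down to $\Sigma(\mathsf p^\ast)/\tau_p$. The Hadamard variation at polytopes that you flag as the main obstacle is exactly what the paper imports ready-made from \cite[Theorem 5.2]{CNSXYZ} for $p\in(1,2)$ and from \cite[Theorem 4.4]{X2} (or \cite[(6.4)']{Je96b}) for $p=2$, so no smoothing-and-limit argument is required.
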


\begin{proof} First of all, the argument for \cite[Theorem 5.4]{Je96a} is modified to reveal that $\Omega({\mathsf{p}})$ is closed and bounded thanks to (i) which derives
$$
|x|\le \sup_{j\in\{1,...,m\}, \theta\in\mathbb S^{1}}{{\mathsf{p}}_j}{|\theta\cdot\zeta_j|}^{-1}\quad{\forall}\quad x\in\Omega({\mathsf{p}}).
$$

Next, since $\{c_j\}_{j=1}^n$ is fixed and
$$
\begin{cases}
\Sigma({\mathsf{p}})\le\Big(\sum_{j=1}^m c_j^2\Big)^\frac12\Big(\sum_{j=1}^m{\mathsf{p}}_j^2\Big)^\frac12;\\
\inf_{{\mathsf{p}}\in\mathsf M}\Sigma({\mathsf{p}})<\infty,
\end{cases}
$$
each minimizing sequence for $\inf_{{\mathsf{p}}\in\mathsf M}\Sigma({\mathsf{p}})$ is bounded, and consequently, we can select a subsequence from the minimizing sequence that converges to ${\mathsf{p}}^\ast$. Now from the continuity of $\mathrm{pcap}(\cdot)$ under the Hausdorff distance $d_H(\cdot,\cdot)$ it follows that ${\mathsf{p}}^\ast\in\mathsf{M}$ is a minimizer. Of course,
$$
\begin{cases}
\hbox{pcap}\big(\Omega({\mathsf{p}}^\ast)\big)=1;\\
\inf_{{\mathsf{p}}\in\mathsf{M}}\Sigma({\mathsf{p}})=\Sigma({\mathsf{p}}^\ast)\quad\forall\quad p\in (1,2]. 
\end{cases}
$$

If $\Sigma({\mathsf{p}}^\ast)=0$, then ${\mathsf{p}}^\ast$ is the origin, and hence condition (i) implies that $\Omega({\mathsf{p}}^\ast)$ consists only of the origin, thereby yielding a contradiction $$1=\hbox{pcap}\big(\Omega({\mathsf{p}}^\ast)\big)=0.$$ 
So, (iv) holds.

Furthermore, if the interior $\big(\Omega({\mathsf{p}}^\ast)\big)^\circ$ of $\Omega({\mathsf{p}}^\ast)$ is empty, then (ii) can be used to deduce that $\Omega({\mathsf{p}}^\ast)$ is contained in a compact convex set $K$ with the Hausdorff dimension $\hbox{dim}_H(K)\le 1$. 

\begin{itemize}
	\item If $\hbox{dim}_H(K)=0$ then $\Omega({\mathsf{p}}^\ast)$ comprises one point and hence 
	$$
	0=\hbox{pcap}\big(\Omega({\mathsf{p}}^\ast)\big)=1,
	$$
	a contradiction.
	\item If $\hbox{dim}_H(K)=1$ then $\Omega({\mathsf{p}}^\ast)$ reduces to a segment and hence there exists $\zeta_j+\zeta_k=0$ for some $j,k\in\{1,...,m\}$ which is against the hypothesis (ii).
	
	\end{itemize}
Thus,
$\Omega({\mathsf{p}}^\ast)$ has a non-empty interior, and consequently (v) holds.

Finally, in order to check (vi), observe that ${\mathsf{p}}^\ast$ is not unique. Given $x_0\in\mathbb R^2$. If ${\mathsf{p}}\in\mathsf{M}$ then an application of (iii) implies that 
$${\mathsf{q}}=\big\{{\mathsf{p}}_j+x_0\cdot\zeta_j\big\}_{j=1}^m
$$ 
enjoys
$$
\begin{cases}
\Omega({\mathsf{q}})=x_0+\Omega({\mathsf{p}});\\ \Sigma({\mathsf{q}})=\Sigma({\mathsf{p}}).
\end{cases}
$$
Due to the fact that $\Omega({\mathsf{p}}^\ast)$ has non-empty interior, the origin may be translated to the interior of $\Omega({\mathsf{p}}^\ast)$ so that each component ${\mathsf{p}}_j^\ast$ is positive. Let $\mathsf{P}$ be the collection of those vectors ${\mathsf{p}}=(\mathsf{p}_1,...,\mathsf{p}_m)$ with
$$
\begin{cases}
{\mathsf{p}}_j\ge 0;\\
\Sigma\big(t{\mathsf{p}}+(1-t){\mathsf{p}}^\ast\big)=\Sigma({\mathsf{p}}^\ast)\ \ \forall\ \ t\in [0,1].
\end{cases}
$$
Then
$$
\begin{cases}
{\mathsf{p}}\in\mathsf{P};\\ t\Omega\big({\mathsf{p}}\big)+(1-t)\Omega({\mathsf{p}}^\ast)\subseteq\Omega\big(t{\mathsf{p}}+(1-t){\mathsf{p}}^\ast\big)\ \ \forall\ \ t\in [0,1],
\end{cases}
$$
plus \cite[Theorem 5.2]{CNSXYZ} (for $p\in (1,2)$) and \cite[(6.4)']{Je96b} or \cite[Theorem 4.4]{X2} (for $p=2$), ensures a constant $w_j>0$ such that
\begin{align*}
\sum_{j=1}^m({\mathsf{p}}_j-{\mathsf{p}}^\ast_j)w_j=\lim_{t\to 0}t^{-1}\Big({\hbox{pcap}\big(t\Omega({\mathsf{p}})+(1-t)\Omega({\mathsf{p}}^\ast)\big)-\hbox{pcap}\big(\Omega({\mathsf{p}}^\ast)\big)}\Big)\le 0.
\end{align*}
Whenever ${\mathsf{p}}$ is close to ${\mathsf{p}}^\ast=(\mathsf{p}_1^\ast,...,\mathsf{p}_m^\ast )$, the support function $h_{\Omega({\mathsf{p}})}$ of $\Omega({\mathsf{p}})$ enjoys
$$
h_{\Omega({\mathsf{p}})}(\zeta_j)={\mathsf{p}}_j\quad\forall\quad j\in \{1,...,m\}.
$$
Recall ${\mathsf{p}}^\ast_{j}>0$. So $$\sum_{j=1}^m({\mathsf{p}}_j-{\mathsf{p}}^\ast_j)w_j=0.$$ 
This last equation gives 
$$
w_j=\tau_p\big(\Sigma({\mathsf{p}}^\ast)\big)^{-1}c_j\quad\forall\quad j\in \{1,...,m\},
$$ 
thereby completing the proof.
\end{proof}

\begin{lemma}\label{l23} Let $p\in (1,2]$ and $\mu$ be a finite, nonnegative, Borel measure comprising a finite sum of point masses on $\mathbb S^{1}$ such that:
\begin{itemize}

\item[{\rm(i)}] $\mu$ is not supported on any equator of $\mathbb S^1$, i.e., $\inf_{\theta\in\mathbb S^1}\int_{\mathbb S^1}| \theta\cdot\xi|\,d\mu(\xi)>0$;

\item[{\rm(ii)}] $\mathrm{supp}(\mu)$ contains no any pair of antipodal points, i.e., if $\mu(\{\eta\})>0$ then $\mu(\{-\eta\})=0$;

\item[{\rm(iii)}] $\int_{\mathbb S^{1}}\theta\cdot\xi\,d\mu(\xi)=0\ \forall\
\theta\in\mathbb S^{1}$.
\end{itemize}
Then there exists a bounded, convex, nonempty, open polygon $O\subset\mathbb R^2$ such that
$d\mu_p(\bar{O},\cdot)=d\mu(\cdot)$.
\end{lemma}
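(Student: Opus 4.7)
The plan is to invoke Lemma~\ref{l22} directly and then absorb the proportionality factor $\tau_p^{-1}\Sigma(\mathsf p^\ast)$ appearing in its conclusion~(vi) by a single dilation. I would first write $\mu=\sum_{j=1}^m c_j\delta_{\zeta_j}$ with $c_j>0$ and pairwise distinct $\zeta_j\in\mathbb S^1$ (collapsing atoms at a common point if necessary). Hypothesis~(i) of the present lemma forces $\sum_j c_j|\theta\cdot\zeta_j|>0$ for every $\theta\in\mathbb S^1$, so for each $\theta$ there is at least one $j$ with $|\theta\cdot\zeta_j|>0$; hypothesis~(ii) excludes $\zeta_j+\zeta_k=0$; and (iii), tested against each coordinate direction, reduces to $\sum_j c_j\zeta_j=0$. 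These are exactly conditions (i)--(iii) of Lemma~\ref{l22}. A short check using (ii)--(iii) and $|\zeta_j|=1$ rules out $m\in\{1,2\}$, so $m\ge 3$ and Lemma~\ref{l22} applies.

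Applying that lemma produces a bounded, convex, open polygon $\Omega^\ast=\Omega(\mathsf p^\ast)$ whose edges $\{F_j\}_{j=1}^m$ carry outer unit normals $\{\zeta_j\}_{j=1}^m$, with $\mathrm{pcap}(\overline{\Omega^\ast})=1$ and
$$
c_j=\tau_p^{-1}\Sigma(\mathsf p^\ast)\int_{F_j}|\nabla u_{\overline{\Omega^\ast}}|^p\,d\mathcal H^1,\qquad 1\le j\le m.
$$
To eliminate the constant $\tau_p^{-1}\Sigma(\mathsf p^\ast)$ I would dilate: both $(\mathrm{eq}_{1<p<2})$ and $(\mathrm{eq}_{p=2})$ are scale-covariant, so $u_{\lambda\overline{\Omega^\ast}}(x)=u_{\overline{\Omega^\ast}}(x/\lambda)$ for every $\lambda>0$, and the change of variables $x=\lambda y$ gives
$$
\int_{\lambda F_j}\bigl|\nabla u_{\lambda\overline{\Omega^\ast}}\bigr|^p\,d\mathcal H^1=\lambda^{1-p}\int_{F_j}|\nabla u_{\overline{\Omega^\ast}}|^p\,d\mathcal H^1.
$$
Choosing $\lambda=(\tau_p/\Sigma(\mathsf p^\ast))^{1/(p-1)}$, positive by property~(iv) of Lemma~\ref{l22}, makes the right-hand side equal to $c_j$ for every $j$.

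Setting $O:=\lambda\Omega^\ast$ then yields a bounded, convex, open polygon whose edge $\lambda F_j$ retains outer unit normal $\zeta_j$; since the finitely many corners of $\partial O$ carry no $\mathcal H^1$-mass, the pushforward $\mathsf g_\ast(|\nabla u_{\bar O}|^p\,d\mathcal H^1)$ concentrates on $\{\zeta_j\}_{j=1}^m$ and equals $\sum_j c_j\delta_{\zeta_j}=\mu$, which is precisely $d\mu_p(\bar O,\cdot)=d\mu(\cdot)$. The serious analytic work lies inside Lemma~\ref{l22}, so within the present argument the only subtle step is the scaling law when $p=2$: the solution of $(\mathrm{eq}_{p=2})$ is pinned down only by $0<\liminf u/\log|\cdot|\le\limsup u/\log|\cdot|<\infty$, and one must check that $v(x):=u_{\overline{\Omega^\ast}}(x/\lambda)$ still satisfies this on $\mathbb R^2\setminus\overline{\lambda\Omega^\ast}$. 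That is routine because $\log|x/\lambda|=\log|x|-\log\lambda$ merely shifts liminf/limsup by a finite constant, preserving positivity.
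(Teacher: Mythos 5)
Your argument is correct and is essentially the paper's own proof: both reduce the statement to Lemma~\ref{l22} by matching hypotheses (i)--(iii), and both remove the factor $\tau_p^{-1}\Sigma(\mathsf p^\ast)$ by dilating and using the homogeneity $\mathsf g_\ast(|\nabla u_{r\bar P}|^p\,d\mathcal H^1)=r^{1-p}\mathsf g_\ast(|\nabla u_{\bar P}|^p\,d\mathcal H^1)$, with the same choice of scale factor. Your added checks that $m\ge 3$ and that the scaling law survives the $p=2$ normalization are harmless refinements of details the paper leaves implicit.
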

\begin{proof} As in demonstrating \cite[Lemma 5.7]{Je96a}, we put
$$d\mu=\sum_{j=1}^m c_j\delta_{\zeta_j}$$
where $c_1,...,c_m>0$ are constants.
Note that conditions (i), (ii) and (iii) in Lemma \ref{l22} amount to (i), (ii) and (iii) in Lemma \ref{l23}, respectively. So, an application of Lemma \ref{l22} yields a bounded, convex, closed polygon ${P}$ containing the origin and a constant $c>0$ such that
$$\mathsf{g}_\ast(|\nabla u_{{P}}|^p\,d\mathcal{H}^1)=cd\mu.$$
Note that if $rP$ is the $r$-dilation of ${P}$ then
$$
\mathsf{g}_\ast(|\nabla u_{r{P}}|^p\,d\mathcal{H}^1)=r^{1-p}\mathsf{g}_\ast(|\nabla u_{{P}}|^p\,d\mathcal{H}^1).
$$
Thus, the desired result follows from choosing $r=c^\frac{1}{p-1}$ and $\bar{O}=rP$.
\end{proof}

\section{Proof of Theorem \ref{t11}}\label{s3}

(i) {\it Existence}. This comprises two parts.

{\it The if-part}. Suppose that $\mu$ has centroid at the origin and $\mathrm{supp}(\mu)$ does not comprise any pair of antipodal points. Of course, the first supposed condition is just
$$
\int_{\mathbb S^1}\theta\cdot\eta\,d\mu(\eta)=0\quad\forall\quad\theta\in\mathbb S^1.
$$
However, the second one implies that $\mu$ is not supported on any equator (the intersection of the unit circle $\mathbb S^1$ with any line through the origin) $\{\theta,-\theta\}$ of $\mathbb S^1$ where $\theta\in\mathbb S^1$ - otherwise $$\mathrm{supp}(\mu)=\{\theta_0,-\theta_0\}\ \ \mathrm{for\ some}\ \ \theta_0\in\mathbb S^1.
$$ 
Conversely, if $\mu$ is unsupported on any equator then $\mathrm{supp}(\mu)$ does not consist of any pair of antipodal points in $\mathbb S^1$ - otherwise there is $\theta_1\in\mathbb S^1$ such that $\mathrm{supp}(\mu)=\{\theta_1,-\theta_1\}$, i.e., $\mu$ is supported on an equator of $\mathbb S^1$. Consequently,
$$
0<\kappa\le\inf_{\theta\in\mathbb S^1}\int_{\mathbb S^{1}}|\theta\cdot\xi|\,d\mu(\xi).
$$

Using the above analysis, we may take a sequence $\{\mu_j\}_{j=1}^\infty$ of finite, nonnegative, Borel measures that are finite sums of point masses, not only converging to $\mu$ in the weak sense, but also satisfying (i)-(ii)-(iii) of Lemma \ref{l23}. According to Lemma \ref{l23}, for each $j$ there is a bounded, convex, closed set (polygon) $\bar{\Omega}_j\subset\mathbb R^2$ containing the origin such that the pull-back measure 
$$
d\mu_p(\bar{\Omega}_j,\cdot)=\mathsf{g}_\ast(|\nabla u_{\bar{\Omega}_j}|^p\,d\mathcal{H}^1)(\cdot)
$$ 
is equal to $d\mu_j(\cdot)$. On the one hand, by Lemma \ref{l21} and $(\star\star)$ there is a constant
${\kappa}_1>0$ independent of $j$ such that
\begin{equation*}\label{51}
{\kappa}_1\le\begin{cases}
   \Big(\big(\frac{p-1}{2-p}\big)^{p-1}\big({(2\pi)^{-1}\mathrm{pcap}(\bar{\Omega}_j)}\big)\Big)^\frac1{2-p}\ \ (\mathrm{for}\ 1<p<2)\\
   \mathrm{pcap}(\bar{\Omega}_j)\ (\mathrm{for}\ p=2)
   \end{cases}
\le\mathrm{diam}(\bar{\Omega}_j).
\end{equation*}
On the other hand, $\bar{\Omega}_j$ contains a segment $S_j$ such that its length is equal to $\mathrm{diam}(\bar{\Omega}_j)$. Due to the translation-invariance of $\mathrm{pcap}(\bar{\Omega}_j)$ it may be assumed that $S_j$ is the segment connecting $-2^{-1}\mathrm{diam}(\bar{\Omega}_j)\theta_j$ and $2^{-1}\mathrm{diam}(\bar{\Omega}_j)\theta_j$ where $\theta_1\in\mathbb S^1$. If $j$ is big enough, then
\begin{align*}
\int_{\mathbb S^1}h_{\bar{\Omega}_j}\,d\mu_j&\ge\int_{\mathbb S^1}h_{S_j}\,d\mu_j\\
&\ge 2^{-1}\mathrm{diam}(\bar{\Omega}_j)\int_{\mathbb S^1}|\theta_j\cdot\xi|\,d\mu_j(\xi)\\
&\ge 2^{-1}\mathrm{diam}(\bar{\Omega}_j)\kappa,
\end{align*}
and hence there is another constant $\kappa_2>0$ independent of $j$ such that 
$\kappa_2\ge\mathrm{diam}(\bar{\Omega}_j).
$
Hence, an application of the Blaschke selection principle (see e.g. \cite[Theorem 1.8.6]{Schn}) derives that $\{\bar{\Omega}_j\}_{j=1}^\infty$ has a subsequence, still denoted by $\{\bar{\Omega}_j\}_{j=1}^\infty$, which
converges to a bounded, compact, convex set $\bar{\Omega}_\infty\subset\mathbb R^2$ with $\mathrm{pcap}(\bar{\Omega}_\infty)>0$. In the sequel, we verify that the interior $(\bar{\Omega}_\infty)^\circ$ of $\bar{\Omega}_\infty$ is not empty. For this, assume $(\bar{\Omega}_\infty)^\circ=\emptyset$. Then the Hausdorff dimension $\hbox{dim}_H(\bar{\Omega}_\infty)$ of $\bar{\Omega}_\infty$ is strictly less than $2$. If $\hbox{dim}_H(\bar{\Omega}_\infty)=0$ then the convexity of $\bar{\Omega}_\infty$ ensures that $\bar{\Omega}_\infty$ is a single point and hence $\hbox{pcap}(\bar{\Omega}_\infty)=0$, contradicting $\hbox{pcap}(\bar{\Omega}_\infty)>0$. This illustrates $\hbox{dim}_H(\bar{\Omega}_\infty)=1$. Consequently, there exists a constant $\kappa_3>0$ and a point $\xi\in\mathbb S^{1}$ such that the pull-back measure $\mathsf{g}_\ast(d\mathcal{H}^1|_{\partial\Omega_\infty})$ of $\mathcal{H}^1|_{\partial\Omega_\infty}$ to $\mathbb S^1$ via the Gauss map $\mathsf{g}$ is equal to $\kappa_3(\delta_\xi+\delta_{-\xi})$.
Upon using Lemma \ref{l21} we obtain a positive constant $\kappa_4$ (independent of $j$ but dependent of $p$ and the radius of an appropriate $o$-centered ball containing all $\bar{\Omega}_j$) such that
$|\nabla u_{\bar{\Omega}_j}|^p\ge\kappa_4$ holds almost everywhere on $\partial{\Omega}_j$. Suppose that $f\in C(\mathbb S^{1})$ (the class of all continuous functions on $\mathbb S^1$) is positive and its support is contained in a small neighbourhood $N(\xi)\subset\mathbb S^{1}$ of $\xi\in\mathbb S^1$ only. Now, we use Fatou's lemma to derive
\begin{align*}
\int_{N(\xi)}f\,d\mu&=\liminf_{j\to\infty}\int_{\mathbb S^{1}}f\,d\mu_j\\
&\ge\kappa_4\liminf_{j\to\infty}\int_{\mathbb S^{1}}f\,\mathsf{g}_\ast\Big(d\mathcal{H}^{1}|_{\partial \bar{\Omega}_j}\Big)\\
&\ge\kappa_4\int_{N(\xi)}\liminf_{j\to\infty}\mathsf{g}_\ast\Big(d\mathcal{H}^{1}|_{\partial \bar{\Omega}_j}\Big)\\
&=\kappa_4\int_{N(\xi)}f\,\mathsf{g}_\ast\Big(d\mathcal{H}^{1}|_{\partial\Omega_\infty}\Big)\\
&=\kappa_4 f(\xi).
\end{align*}
Thus, Radon-Nikodym's differentiation of $\mu$ with respect to the Dirac measure concentrated at $\xi$ (cf. \cite[page 42, Theorem 3]{EvaG}) implies that $\mu$ must have a positive mass at $\xi$, and similarly, $\mu(\{-\xi\})>0$. Thus $$\mathrm{supp}(\mu)\supset\{\xi,-\xi\}.$$
Meanwhile, if $$\xi_0\in\mathbb S^1\setminus\{\xi,-\xi\},$$ 
then an application of the fact that the polygon $\bar{\Omega}_j$ (whose Gauss map is denoted by $\mathsf{g}_j:\partial{\Omega}_j\to\mathbb S^1$) approaches $\Omega$ (which has only two outer unit normal vectors $\pm\xi$) ensures that $\xi_0$ is not in the set of all outer unit normal vectors of $\bar{\Omega}_j$, thereby yielding 
$$
\mathcal{H}^1(\mathsf{g}_j^{-1}\big(\{\xi_0\})\big)=0\ \ \mbox{as}\ \  j>N
$$ 
for a sufficiently large $N$. According to \cite[Theorems 1\&3]{LN}, there is $q>p$ such that $|\nabla u_{\bar{\Omega}_j}|^q$ is integrable on $\mathsf{g}_j^{-1}(\{\xi_0\})$ with respect to $d\mathcal{H}^1|_{\partial{\Omega}_j}$. This existence, the H\"older inequality, the weak convergence of $\mu_j$, and Fatou's lemma, imply
\begin{align*}
0&\le\mu\big(\{\xi_0\}\big)\\
&\le\liminf_{j\to\infty}\mu_j\big(\{\xi_0\}\big)\\
&=\liminf_{j\to\infty}\int_{\mathsf{g}_j^{-1}(\{\xi_0\})}|\nabla u_{\bar{\Omega}_j}|^p\,d\mathcal{H}^1|_{\partial{\Omega}_j}\\
&\le\liminf_{j\to\infty}\left(\int_{\mathsf{g}_j^{-1}(\{\xi_0\})}|\nabla u_{\bar{\Omega}_j}|^q\,d\mathcal{H}^1|_{\partial{\Omega}_j}\right)^\frac{p}q\left(\mathcal{H}^1\big(\mathsf{g}_j^{-1}(\{\xi_0\})\big)\right)^{1-\frac{p}{q}}\\
&=0.
\end{align*}
Consequently, $\mu(\{\xi_0\})=0$. So, $$\mathrm{supp}(\mu)=\{\xi,-\xi\},$$ 
which contradicts the second supposed condition. In other words, $(\bar{\Omega}_\infty)^\circ\not=\emptyset$. This, along with 
$$
d\mu_p(\bar{\Omega}_j,\cdot)=d\mu_j(\cdot)
$$
and the weak convergence of $\mu_j\to\mu$, derives 
$$
d\mu_p(\bar{\Omega}_\infty,\cdot)=d\mu(\cdot),
$$
as desired.

{\it The only-if part}. Suppose that $d\mu_p(\bar{\Omega},\cdot)=d\mu(\cdot)$ holds for a bounded, convex, nonempty, open set $\Omega\subset\mathbb R^2$. Note first that $\hbox{pcap}(\cdot)$ is translation invariant. So
$$
\hbox{pcap}(\bar{\Omega}+\{x_0\})=\hbox{pcap}(\bar{\Omega})\quad\forall\quad x_0\in\mathbb R^2.
$$
However, the translation $\bar{\Omega}\mapsto \bar{\Omega}+\{x_0\}$ changes $x\cdot\mathsf{g}$ to
$x\cdot{\mathsf g}+x_0\cdot x.$ Thus, an application of $(\star\star\star)$ yields
\begin{align*}
&\int_{\partial(\bar{\Omega}+\{x_0\})}\big(x\cdot\mathsf{g}(x)\big)|\nabla u_{\bar{\Omega}+\{x_0\}}(x)|^p\,d\mathcal{H}^{1}(x)\\
&\quad\quad=\int_{\partial\Omega}\big(x_0\cdot\mathsf{g}(x)\big)|\nabla u_{\bar{\Omega}}(x)|^p\,d\mathcal{H}^{1}(x)+
\int_{\partial\Omega}\big(x\cdot\mathsf{g}(x)\big)|\nabla u_{\bar{\Omega}}(x)|^p\,d\mathcal{H}^{1}(x).
\end{align*}
Consequently,
$$
\int_{\partial \Omega}(x_0\cdot\mathsf{g}(x))|\nabla u_{\bar{\Omega}}(x)|^p\,d\mathcal{H}^{1}(x)=0.
$$
This in turn implies the following linear constraint on $\mu$:
$$
\int_{\mathbb S^{1}}\theta\cdot\eta\,d\mu(\theta)=\int_{\mathbb S^{1}}\theta\cdot\eta\,d\mu_p(\bar{\Omega},\theta)=0\ \ \forall\ \ \eta\in\mathbb S^{1}.
$$

Next, let us validate that $\mathrm{supp}(\mu)$ does not comprise any pair of antipodal points. If this is not true, then there is $\theta_0\in\mathbb S^1$ such that $$\mathrm{supp}(\mu)=\{\theta_0,-\theta_0\}.$$ 
However, the following considerations (partially motivated by \cite[Lemma 4.1]{BT} handling the necessary part of a planar $L_p$-Minkowski problem from \cite{Lut}) will show that this last identification cannot be valid.

{\it Case $o\in\Omega$}. This, together with Lemma \ref{l21}, ensures 
$$
\{\theta_0,-\theta_0\}=\mathrm{supp}\big(\mu_p(\bar{\Omega},\cdot)\big)=\mathrm{supp}\big(\mathsf{g}_\ast(d\mathcal{H}^1|_{\partial\Omega})\big).
$$
However, $\bar{\Omega}$ is not degenerate, so $\mathrm{supp}\big(\mathsf{g}_\ast(d\mathcal{H}^1|_{\partial\Omega}))$ cannot be $\{\theta_0,-\theta_0\}$ - a contradiction occurs.

{\it Case $o\in\partial\Omega$}. Denote by $\Lambda$ the exterior normal cone at $o$ such that
$$
\Lambda\cap\mathbb S^1=\big\{\eta\in\mathbb S^1: \ h_\Omega(\eta)=0\big\}.
$$  
Since 
$\mathrm{supp}(\mu)$ coincides with $\mathrm{supp}(\mu_p(\bar{\Omega},\cdot))$,
it follows that $h_\Omega(\theta_0)$ and $h_\Omega(\theta_0)$ are positive. This in turn implies that $\pm\theta_0$ are not in $\Lambda$. Without loss of generality we may assume that $\Lambda\cap\mathbb S^1$ is a subset of the following semi-circle
$$
\mathbb T(-\theta_0,o)=\big\{\zeta\in\mathbb S^1:\ \zeta\cdot\theta_0<0\big\}.
$$
Accordingly, if 
$$
\eta\in\mathbb T(\theta_0,o)=\big\{\zeta\in\mathbb S^1:\ \zeta\cdot\theta_0>0\big\},
$$
then $h_{\bar{\Omega}}(\eta)>0$. Also because of 
$$
\mathsf{g}_\ast\Big(d\mathcal{H}^1|_{\partial\Omega}\Big)\big(\mathbb T(\theta_0,o)\big)>0
$$
and Lemma \ref{l21} (with a positive constant $c$ depending only on $p$ and $r$ - the radius of a suitable ball $\mathbb{D}(o,r)\supset\Omega$), we utilize $$\mathrm{supp}(\mu)=\{\theta_0,-\theta_0\}$$ to obtain the following contradictory computation:
\begin{align*}
0&=\mu\big(\mathbb T(\theta_0,o)\big)\\
&=\mu_p\big(\bar{\Omega},\mathbb T(\theta_0,o)\big)\\
&=\int_{\mathsf{g}^{-1}\big(\mathbb T(\theta_0,o)\big)}|\nabla u_{\bar{\Omega}}|^p\,d\mathcal{H}^1\\
&\ge c^p\mathcal{H}^1\big(\mathsf{g}^{-1}\big(\mathbb T(\theta_0,o)\big)\\
&>0.
\end{align*}

(ii) {\it Uniqueness}. Suppose that ${\Omega}_0, {\Omega}_1$ are two solutions of the equation $d\mu_p(\bar{\Omega},\cdot)=d\mu(\cdot)$. Then
$$
g_\ast(|\nabla u_{\bar{\Omega}_0}|^p\,d\mathcal{H}^1)=g_\ast(|\nabla u_{\bar{\Omega}_1}|^p\,d\mathcal{H}^1).
$$
To reach the conclusion that ${\Omega}_0$ and ${\Omega}_1$ are the same up to a translate, we define 
$$
[0,1]\ni t\mapsto f_p(t)=\begin{cases}\Big(\hbox{pcap}\big((1-t)\bar{\Omega}_0+t\bar{\Omega}_1\big)\Big)^\frac{1}{2-p}\ \ \mathrm{as}\ \ p\in (1,2);\\
\hbox{pcap}\big((1-t)\bar{\Omega}_0+t\bar{\Omega}_1\big)\ \ \mathrm{as}\ \ p=2,
\end{cases}
$$ 
and handle the following two cases.

{\it Case $p\in (1,2)$}. In a manner (cf. \cite{CJeL}) slightly different from proving \cite[Theorem 1.2]{CNSXYZ} (under $n=2>p>1$), we use the chain rule, \cite[Theorem 1.1]{CNSXYZ} (under $n=2$) and ($\star\star\star$) to get
\begin{align*}
f'_p(0)&=\frac{\big(f_p(0)\big)^{p-1}}{(\frac{2-p}{p-1})}
\int_{\partial \bar{\Omega}_0}\big(h_{\bar{\Omega}_1}(\mathsf{g})-h_{\bar{\Omega}_0}(\mathsf{g})\big)|\nabla u_{\bar{\Omega}_0}|^p\,d\mathcal{H}^1\\
&=\frac{\big(f_p(0)\big)^{p-1}}{(\frac{2-p}{p-1})}
\left(\int_{\partial \bar{\Omega}_0}h_{\bar{\Omega}_1}(\mathsf{g})|\nabla u_{\bar{\Omega}_0}|^p\,d\mathcal{H}^1-\int_{\partial{\Omega}_0}h_{\bar{\Omega}_0}(\mathsf{g})|\nabla u_{\bar{\Omega}_0}|^p\,d\mathcal{H}^1\right)\\
&=\frac{\big(f_p(0)\big)^{p-1}}{(\frac{2-p}{p-1})}
\left(\int_{\mathbb S^1}h_{\bar{\Omega}_1}\mathsf{g}_\ast\big(|\nabla u_{\bar{\Omega}_0}|^p\,d\mathcal{H}^1\big)-\int_{\mathbb S^1}h_{\bar{\Omega}_0}\mathsf{g}_\ast\big(|\nabla u_{\bar{\Omega}_0}|^p\,d\mathcal{H}^1\big)\right)\\
&=\frac{\big(f_p(0)\big)^{p-1}}{(\frac{2-p}{p-1})}
\left(\int_{\mathbb S^1}h_{\bar{\Omega}_1}\mathsf{g}_\ast\big(|\nabla u_{\bar{\Omega}_1}|^p\,d\mathcal{H}^1\big)-\int_{\mathbb S^1}h_{\bar{\Omega}_0}\mathsf{g}_\ast\big(|\nabla u_{\bar{\Omega}_0}|^p\,d\mathcal{H}^1\big)\right)\\
&=\big(f_p(0)\big)^{p-1}\Big(\big(f_p(1)\big)^{2-p}-\big(f_p(0)\big)^{2-p}\Big).
\end{align*}
According to \cite[Theorem 1]{CS}, $f_p$ is concave, and so
$$
f_p(1)-f_p(0)\le f'_p(0)=\big(f_p(0)\big)^{p-1}\Big(\big(f_p(1)\big)^{2-p}-\big(f_p(0)\big)^{2-p}\Big).
$$
This, along with exchanging $\bar{\Omega}_0$ and $\bar{\Omega}_1$, implies
$$
\mathrm{pcap}(\bar{\Omega}_1)=f_p(1)\le f_p(0)=\mathrm{pcap}(\bar{\Omega}_0)\le f_p(1)=\mathrm{pcap}(\bar{\Omega}_1),
$$
thereby producing $f_p'(0)=0$ and $f_p$ being a constant thanks to the concavity of $f_p$. Since $\bar{\Omega}_0$ and $\bar{\Omega}_1$ have the same $p$-capacity, an application of the equality in \cite[Theorem 1]{CS} yields that ${\Omega}_0$ is a translate of ${\Omega}_1$.

{\it Case $p=2$}. Referring to the argument for \cite[Theorem 5.1]{X2} under $n=2$, we employ \cite[Theorems 4.4 \& 3.1]{X2} to deduce
\begin{align*}
f_2'(0)&=(2\pi)^{-1}f_2(0)\int_{\partial \bar{\Omega}_0}\big(h_{\bar{\Omega}_1}(\mathsf{g})-h_{\bar{\Omega}_0}(\mathsf{g})\big)|\nabla u_{\bar{\Omega}_0}|^2\,d\mathcal{H}^1\\
&=(2\pi)^{-1}f_2(0)\left(\int_{\partial \bar{\Omega}_0}h_{\bar{\Omega}_1}(\mathsf{g})|\nabla u_{\bar{\Omega}_0}|^2\,d\mathcal{H}^1-2\pi\right)\\
&={(2\pi)}^{-1}f_2(0)\left(\int_{\mathbb S^{1}}h_{\bar{\Omega}_1}\,\mathsf{g}_\ast(|\nabla u_{\bar{\Omega}_0}|^2\,d\mathcal{H}^1)-2\pi\right)\\
&={(2\pi)}^{-1}f_2(0)\left(\int_{\mathbb S^{1}}h_{\bar{\Omega}_1}\,\mathsf{g}_\ast(|\nabla u_{\bar{\Omega}_1}|^2\,d\mathcal{H}^1)-2\pi\right)\\
&={(2\pi)}^{-1}f_2(0)(2\pi-2\pi)\\
&=0.
\end{align*}
Note that $t\mapsto f_2(t)$ is concave on $[0,1]$ (cf. \cite{Bor, CC}). So $f_2$ is a constant function on $[0,1]$, in particular, we have
\begin{equation*}\label{ef}
\hbox{2cap}(\bar{\Omega}_1)=f_2(1)=f_2(t)=f_2(0)=\mathrm{2cap}(\bar{\Omega}_0).
\end{equation*}
 As a consequence, the equation
 $$
 f_2(t)=f_2(0)\ \ \forall\ \ t\in [0,1]
 $$ 
 and \cite[Theorem 3.1]{CC} ensure that ${\Omega}_0$ and ${\Omega}_1$ are the same up to translation and dilation. But, 
 $$\mathrm{2cap}(\bar{\Omega}_0)=\mathrm{2cap}(\bar{\Omega}_1)
 $$ forces that ${\Omega}_1$ is only a translate of ${\Omega}_0$.

(iii) {\it Regularity}. \cite[Theorem 1.4]{CNSXYZ} covers the case $1<p<2=n$. The argument for \cite[Theorem 1.4]{CNSXYZ} or for the regularity part of \cite[Theorem 0.7]{Je96a} (cf. \cite[Theorem 7.1]{Je96a} and \cite{GH}) under $n=2$ can be modified to verify the case $p=2$. For reader's convenience, an outline of this verification under $p\in (1,2]$ is presented below.

Firstly, we observe that Lemmas 7.2-7.3-7.4 in \cite{CNSXYZ} are still valid for the $(1,2]\ni p$-equilibrium potential $u_{\bar{\Omega}}$.

Secondly, \cite[Lemma 6.16]{Je96a} can be used to produce two constants $c>0$ and $\epsilon\in (0,1)$ (depending on the Lipschitz constant of $\Omega$) such that (cf. \cite[Lemma 7.5]{CNSXYZ} for $p\in (1,2)$ and \cite[Theorem 6.5]{Je96a} for $p=2$)
\begin{equation*}\label{eM1}
\int_{H\cap\partial \Omega}\big(\delta(\cdot,H\cap\partial \Omega)\big)^{1-\epsilon}|\nabla u_{\bar{\Omega}}(\cdot)|^p\,d\mathcal{H}^1(\cdot)\le c\mathcal{H}^1(H\cap\partial\Omega)\inf_{H\cap\partial \Omega}|\nabla u_{\bar{\Omega}}|^p
\end{equation*}
holds for any half-plane 
$H\subset\mathbb R^2$ with
$H\cap{\mathbb D}(o,r_{int})=\emptyset,
$ 
where $r_{int}$ is the inner radius of $\Omega$, and $\delta(x,H\cap\partial\Omega)$ is a normalized distance from $x$ to $H\cap\partial\Omega$.

Thirdly, from \cite[Lemma 7.7]{CNSXYZ} it follows that if
$$d\mu_p(\bar{\Omega},\cdot)=\psi(\cdot)\, d\ell(\cdot)$$ is valid for some integrable function $\psi$ being greater than a positive constant $c$ on $\mathbb S^{1}$, and if $\phi$ stands for the convex and Lipschitz function defined on a bounded open interval $O\subset\mathbb R^{1}$ whose graph 
$$
G=\{(s,\phi(s)):\ s\in O\}
$$ 
is a portion of the convex curve $\partial\Omega$, then $\phi$ enjoys the following $(1,2]\ni p$-Monge-Amp\'ere equation in Alexandrov's sense (cf. \cite[p.6]{Gut}):
\begin{align*}
\phi''(s)&=\mathrm{det}\big(\nabla^2\phi(s)\big)\\
&={\big(1+|\nabla\phi(s)|^2\big)^\frac{3}{2}\big|\big(\nabla u_{\bar{\Omega}}\big)\big(s,\phi(s)\big)\big|^p}\big(\psi(\xi)\big)^{-1}\\
&={\Big(1+\big(\phi'(s)\big)^2\Big)^\frac{3}{2}\big|\big(\nabla u_{\bar{\Omega}}\big)\big(s,\phi(s)\big)\big|^p}\big(\psi(\xi)\big)^{-1}\\
&\equiv\Phi_p(\bar\Omega,s),
\end{align*}
where 
\begin{itemize}
\item[$\circ$]
$$
\frac{d}{ds}u_{\bar{\Omega}}\big(s,\phi(s)\big)=\big(1,\phi'(s)\big)\cdot\big(\nabla u_{\bar{\Omega}}\big)\big(s,\phi(s)\big)
$$
is utilized to explain the action of $\nabla u_{\bar{\Omega}}$ at $\big(s,\phi(s)\big)\in G$;
\item[$\circ$]
$$
s\mapsto\phi''(s)\Big(1+\big(\phi'(s)\big)^2\Big)^{-\frac{3}{2}}\big|\big(\nabla u_{\bar{\Omega}}\big)\big(s,\phi(s)\big)\big|^{-p}
$$
is regarded as the $p$-equilibrium-potential-curvature on $G\subset\partial\Omega$;
\item[$\circ$]
$$
\xi={\big(\phi'(s),-1\big)}\Big({1+\big(\phi'(s)\big)^2}\Big)^{-\frac12}
$$
is written for the outer unit normal vector at $\big(s,\phi(s)\big)\in G$.
\end{itemize}

Fourthly, an application of the secondly-part and the thirdly-part above and \cite[Theorem 7.1]{Je96a} derives that if $\psi$ is bounded above and below by two positive constants then Caffarelli's methodology developed in \cite{Caf3} can be adapted to establish that $\partial\Omega$ is of $C^{1,\epsilon}$ for the above-found $\epsilon\in (0,1)$. Now, for $\alpha\in (0,1)$ let the positive function $\psi$ in $$d\mu_p(\bar{\Omega},\cdot)=\psi(\cdot) d\ell(\cdot)$$ belong to $C^{0,\alpha}(\mathbb S^{1})$. Since $\partial\Omega$ is of $C^{1,\epsilon}$, a barrier argument, plus \cite{Lie}, yields that $|\nabla u_{\bar{\Omega}}|$ is not only bounded above and below by two positive constants (and so is $\phi''$ on $O$), but also $|\nabla u_{\bar{\Omega}}|$ is of $C^{0,\epsilon}$ up to $\partial\Omega$. From the thirdly-part above it follows that 
$\Phi_p(\bar{\Omega},\cdot)$ is of $C^{0,\epsilon_1}$ for some $\epsilon_1\in (0,1)$. This, along with  $\phi''(\cdot)=\Phi_p(\bar\Omega,\cdot)$, gives that $\phi$ is of $C^{2,\epsilon_1}$. As a consequence, we see that $|\nabla u_{\bar{\Omega}}|$ is of $C^{1,\epsilon_2}$ up to $\partial\Omega$ for some $\epsilon_2\in (0,1)$, and thereby finding that 
$\Phi_p(\bar\Omega,\cdot)$ is of $C^{0,\alpha}$. Accordingly, $\partial\Omega$ being of $C^{2,\alpha}$ follows from Caffarelli's three papers \cite{Caf0,Caf1,Caf2}. Continuing this initial precess, we can reach the desired higher order regularity.

\medskip
\noindent {\it Acknowledgement}. The author is grateful to Han Hong and Ning Zhang for several discussions on the only-if-part of Theorem \ref{t11}(i).


\begin{thebibliography}{99}

\bibitem{Ada1} T. Adamowicz, {\it On $p$-harmonic mappings in the plane}. {Nonlinear Anal.} 71(2009)502-511.

\bibitem{Ada2} T. Adamowicz, {\it The geometry of planar $p$-harmonic mappings: convexity, level curves and the isoperimetric inequality}. {Ann. Sc. Norm. Super. Pisa Cl. Sci.} 14(2015) 263-292. 

\bibitem{Aro} G. Aronsson, {\it Aspects of p-harmonic functions in the plane.} Summer School in Potential Theory (Joensuu, 1990), 9-34, Joensuun Yliop. Luonnont. Julk., 26, Univ. Joensuu, Joensuu, 1992. 

\bibitem{BPS} R. W. Barnard, K. Pearce and A.Y. Solynin, {\it An isoperimetric inequality for logarithmic capacity}. Ann. Acad. Sci. Fenn. Math. 27(2002)419-436.

\bibitem{Bor} C. Borell, {\it Hitting probability of killed Brownian motion: A study on geometric regularity}. Ann. Sci. Ecole Norm. Sup\'er. Paris 17(1984)451-467.


\bibitem{BT} K. J. B\"or\"oczky and H. T. Trinh, {\it The planar $L_p$-Minkowski problem for $0<p<1$}. Adv. in Appl. Math. 87(2017)58-81.

\bibitem{Caf0} L. Caffarelli, {\it Interior a priori estimates for solutons of fully non-linear equations}. Ann. Math. 131(1989)189-213.

\bibitem{Caf1} L. Caffarelli, {\it A localization property of viscosity solutions to the Monge-Amp{\'e}re equation and their strict convexity}. Ann. of Math. {131}(1990)129-134.

\bibitem{Caf2} L. Caffarelli, {\it Interior $W^{2,p}$ estimates for solutions of the Monge-Amp{\'e}re equation}. Ann. Math. {131}(1990)135-150.

\bibitem{Caf3} L. Caffarelli, {\it Some regularity properties of solutions to the Monge-Amp{\'e}re equation}. Comm. Pure Appl. Math. {44}(1991)965-969.

\bibitem{CJeL} L. Caffarelli, D. Jerison and E. H. Lieb, {\it On the
    case of equality in the Brunn-Minkowski inequality for capacity}. Adv. Math. {117}(1996)193-207.

\bibitem{CY} S.-Y. Cheng and S.-T. Yau, {\it On the regularity of the solution of the $n$-dimensional Minkiwski problem}. Comm. Pure Appl. Math. 29(1976)495-561.

\bibitem{CC} A. Colesanti and P. Cuoghi, {\it The Brunn-Minkowski inequality for the $n$-dimensional loarithmic capacity}. Potential Anal. 22(2005)289-304.

\bibitem{CNSXYZ} A. Colesanti, K. Nystr\"om, P. Salani, J. Xiao, D. Yang and G. Zhang, {\it The Hadamard variational formula and the Minkowski problem for p-capacity.} {Adv. Math.} 285(2015)1511-1588.

\bibitem{CS} A. Colesanti and P. Salani, {\it The Brunn-Minkowski inequality for $p$-capacity of convex bodies}. Math. Ann. {327}(2003)459-479.

\bibitem{Da} B.E.J. Dahlberg, {\it Estimates of harmonic measure}. Arch. Rational Mech. Anal. 65(1977)275-288.

\bibitem{EvaG} L.C. Evans and R.F. Gariepy, {\it Measure Theory and Fine Properties of Functions}. CRC Press, 1992.


\bibitem{GH} R.J. Gardner and D. Hartenstine, {\it Capacities, surface area, and radial sums}. Adv. Math. 221(2009)601-626.

\bibitem{Gut} C.E. Guti\'errez, {\it The Monge-Amp\`ere Equation}. Progress in Nonlinear Differential Equations and Their Applications, Vol. 44, Birkh\"auser, 2001.

\bibitem{GH} C.E. Guti\'errez and D. Hartenstine, {\it Regularity of weak solutions to the Monge-Amp\`ere equation}. Trans. Amer. Math. Soc. 355(2003)2477-2500.

\bibitem{J} D. Jerison, {\it Prescribing harmonic measure on convex domains}. Invent. Math. {105}(1991)375-400.

\bibitem{Je96a} D.\,Jerison, {\it A Minkowski problem for electrostatic capacity}. Acta Math. {176}(1996)1-47.

\bibitem{Je96b} D. Jerison, {\it The direct method in the calculus of variations for convex bodies}. Adv. Math. {122}(1996)262-279.

\bibitem{K} D. A. Klain, {\it The Minkowski problem for polytopes}. Adv. Math. 185(2004)270-288.

\bibitem{Lewis} J. L. Lewis, {\it Capacitary functions in convex rings}. Arch. Rational Mech. Anal. 66(1977)201-224.

\bibitem{LN} J. L. Lewis and K. Nystr{\"o}m, {\em Boundary behaviour for $p$-harmonic functions in Lipschitz and starlike Lipschitz ring domains}.  Ann. Sci. \'Ecole Norm. Sup. {40}(2007)765-813.

\bibitem{LN2} J. L. Lewis and K. Nystr{\"o}m, {\em Regularity and
free boundary regularity for the $p$-Laplacian in Lipschitz and $C^1$-domains}.  Ann. Acad. Sci. Fenn. Math. {33}(2008)523-548.

\bibitem{Lie} G. M. Lieberman, {\it Boundary regularity for solutions of degenerate elliptic equations}. Nonlinear Anal. {12}(1988)1203-1219.


\bibitem{LXZ} M. Ludwig, J. Xiao and G. Zhang, {\it Sharp convex Lorentz-Sobolev inequalities}. Math. Ann. 350(2011)169-197.

\bibitem{Lut} E. Lutwak, {\it The Brunn-Minkowski-Firey theory. I. mixed volumes and the Minkowksi problem}. J. Differential Geom. 38(1993)131-150.

\bibitem{Schn} R. Schneider, {\it Convex Bodies: The Brunn-Minkowski Theory}. Cambridge Univ. Press, 1993.

\bibitem{SZ} A. Y. Solynin and V. A. Zalgaller, {\it An isoperimetric inequality for logarithmic capacity of polygons}. Ann. Math. 159(2004)277-303.

\bibitem{U} V. Umanskiy, {\it On solvability of two-dimensional $L_p$-Minkwoski problem}. Adv. Math. 180(2003)176-186.

\bibitem{X1} J.\,Xiao, {\it  On the variational $p$-capacity problem in the plane.} Commun. Pure Appl. Anal. 14(2015)959-968.

\bibitem{X2} J.\,Xiao, {\it Exploiting log-capacity in convex geometry}. Asian J. Math. 22(2018)955-980.

\end{thebibliography}
\end{document}